\theoremstyle{plain}
 \newtheorem{thm}{Theorem}[section]
 \newtheorem{prop}{Proposition}[section]
\theoremstyle{definition}
 \newtheorem{rem}{Remark}[section]
\numberwithin{equation}{section}
\renewcommand{\leq}{\leqslant}
\renewcommand{\geq}{\geqslant}
\newcommand{\vague}{\stackrel{\lower0.2ex\hbox{$\scriptscriptstyle
                    \it{v} $}}{\rightarrow}}
\newcommand{\weak}{\stackrel{\lower0.2ex\hbox{$\scriptscriptstyle
                    \it{w} $}}{\rightarrow}}
\newcommand{\what}{\stackrel{\lower0.2ex\hbox{$\scriptscriptstyle
                    \it{\hat{w}} $}}{\rightarrow}}
\newcommand{\eqdis}{\stackrel{\lower0.2ex\hbox{$\scriptscriptstyle
                    \mathrm{d}$}}{=}}
\newcommand{\distr}{\stackrel{\lower0.2ex\hbox{$\scriptscriptstyle
                    \it{d} $}}{\rightarrow}}
\def\BX{\boldsymbol X}
\def\BC{\boldsymbol C}
\def\bb{\boldsymbol b}
\newcommand{\bbr}{\mathbb{R}}
\newcommand{\bbn}{\mathbb{N}}
\newcommand{\cF}{\mathcal{F}}
\newcommand{\one}{{\bf 1}}
\title[Count Normality]{Asymptotic Normality of Degree Counts in a
  Preferential Attachment Model}
\subjclass[2010]{28A33,60G17,60G51,60G70}
\keywords{power law, degree counts, preferential attachment, random graphs}
\author[Resnick ]{Sidney I. Resnick}
\address{Sidney I. Resnick\\School of ORIE, Cornell University,
Ithaca, NY 14853} \email{sir1@cornell.edu}
\author[Samorodnitsky ]{Gennady Samorodnitsky}
\address{Gennady Samorodnitsky\\School of ORIE, Cornell University,
Ithaca, NY 14853} \email{gs18@cornell.edu}
\thanks{S. Resnick and G. Samorodnitsky were supported by Army MURI grant
  W911NF-12-1-0385 to Cornell University.} 
\begin{document}

\setcounter{page}{1}
\thispagestyle{empty}

\begin{abstract}
Preferential attachment is a widely adopted paradigm for understanding
the dynamics of 
 social networks.  Formal statistical inference,
for instance GLM techniques, and model
verification methods will require knowing test statistics are asymptotically
normal even though node or count based 
network data is nothing like  classical data from
independently replicated experiments. We therefore study asymptotic
normality of degree counts for a sequence of growing simple undirected
preferential attachment graphs. The methods of proof rely on
identifying martingales and then exploiting the martingale central
limit theorems.
\end{abstract}
\bibliographystyle{plainnat}
\maketitle

\section{Introduction}
Preferential attachment is a widely adopted model for understanding
social network growth. The assumption posits that nodes with a large
number of existing connections are more likely to attract connections from
new nodes joining the network. This paradigm is one of the
justifications for perceived power law behavior.

Statistical analyses of social networks is complicated by the fact
that node based data is nothing like classical iid data obtained from
repeated sampling. Efforts at statistical estimation and model
confirmation center around graphical based slope methods, regression
models for the center of the data and tail estimation
methodology. These statistical techniques are adaptations of classical
methods but  currently are largely without justification. Yet they
produce reasonable answers and plots.

We begin a program for justifying statistical methodology by examining
the asymptotic normality of counting variables for the number of nodes
of degree $k$ in a simple undirected preferential attachment model
 described in \cite[Chapter 8]{vanderHofstad:2014}. We
let $N_n(k)$ be the number of nodes of degree $k$ at the $n$th stage
of development of the network. It is known $N_n(k)/n  \to p_k$ and for
our model, the sequence $\{p_k\}$ can be exhibited explicitly. The
martingale central limit theorem allows us to prove asymptotic
normality for $\sqrt n (N_k(k)/n -p_k)$. This emphasizes consistency of the
empirical count percentages as estimates of $\{p_k\}$ and provides
confidence statements for the estimates.

We describe the undirected preferential attachment model in Section 
\ref{subsec:model} and give known mathematical results that we need in
Section \ref{sec:counts}.
Asymptotic normality is considered in the simplest case $k=1$ in
Section \ref{sec:counts}  and for the general case in Section \ref{subsec:k}.

\subsection{The model for preferential attachment.}\label{subsec:model}
We consider a simple growing undirected graph with preferential attachment that
is outlined in \cite[Chapter 8]{vanderHofstad:2014} or
\cite{durrett:2010}. The random graph 
at stage $n$ is $G_n=(V_n,E_n)$, the set of nodes or vertices is
$V_n:=\{1,\dots,n\}$ and the set of undirected edges is $E_n$, a
subset of $\{ \{i,j\}: i,j \in V_n\}$.
For $v\in V_n$, let $D_n(v)$ be the degree of $v$ at stage $n$; that
is, the number of edges incident to $v$. As a convenient and harmless
initialization, assume $V_1$ consists of a single node $1$ with a self-loop so that $D_1(1)=2.$

Conditional on knowing the graph $G_n$, at stage $n+1$ a new node $n+1$
appears and with a parameter $\delta > -1$, either
\begin{enumerate}
\item The new node $n+1$ attaches to $v\in V_n$ with probability
\begin{equation}\label{eq:attach1}
 \frac{D_n(v)+\delta}{n(2+\delta)+(1+\delta)},\end{equation}
or
\item $n+1$ attaches to itself 
with probability
\begin{equation}\label{eq:attach2}
  \frac{1+\delta}{n(2+\delta)+(1+\delta)}.\end{equation} 
\end{enumerate}
In the first case $D_{n+1}(n+1)=1$ and in the second
case $D_{n+1}(n+1)=2$. It is standard for this model that
$$\sum_{v\in V_n} D_n(v)=2n$$ and thus the attachment probabilities in
\eqref{eq:attach1}
and  \eqref{eq:attach2} 
add to 1.

\section{Martingale Central Limit theorem} \label{sec:MartCLT}

Martingale central limit theorems have been used for a long time. In
order to make the paper self-contained, we present the statements we
will need in this section. We start with a one-dimensional martingale
adaptation of the Lindeberg-Feller central limit theorem
(\cite[Chapter 8]{durrett:2010a} or 
\cite{hall:heyde:1980}). 
 
\begin{prop}\label{prop:mgCLT} Let $\{X_{n,m},\mathcal{F}_{n,m}, 1\leq
  m\leq n\} $ be a square integrable martingale difference array
  satisfying 
\begin{enumerate}
\item $V_{n,n}:=\sum_{m\leq n} E(X^2_{n,m}|\mathcal{F}_{n,m-1})
\stackrel{P}{\to} \sigma^2$ as $n\to\infty$.
\item $\sum_{m\leq n} E(X^2_{n,m}1_{[|X_{n,m} |>\epsilon]}|\mathcal{F}_{n,m-1} )
  \stackrel{P}{\to} 0$ as $n\to\infty$ for all $\epsilon >0$.
\end{enumerate}
Then as $n\to\infty$,
\begin{equation}\label{eq:mgCLT}
\sum_{m=1}^n X_{n,m} \Rightarrow N(0,\sigma^2). \end{equation}
\end{prop}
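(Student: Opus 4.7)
The plan is to prove convergence in distribution via characteristic functions: fix $t \in \mathbb{R}$, set $S_n = \sum_{m=1}^n X_{n,m}$, and aim to show $E[e^{itS_n}] \to e^{-t^2\sigma^2/2}$. The natural route is to track the complex exponential $e^{itS_n} = \prod_{m=1}^n e^{itX_{n,m}}$ progressively through the filtration, comparing it with the product of the conditional characteristic functions $A_{n,m}(t) := E[e^{itX_{n,m}} \mid \mathcal{F}_{n,m-1}]$.

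The first step is to Taylor-expand each $A_{n,m}(t)$. Since $X_{n,m}$ is a martingale difference, $E[X_{n,m} \mid \mathcal{F}_{n,m-1}] = 0$, and the elementary estimate $|e^{itx} - 1 - itx + \tfrac12 t^2x^2| \le C\min(t^2x^2, |tx|^3)$ gives
$$A_{n,m}(t) = 1 - \tfrac{t^2}{2}\,E[X_{n,m}^2 \mid \mathcal{F}_{n,m-1}] + R_{n,m}(t),$$
where the conditional remainder satisfies, for any fixed $\epsilon > 0$,
$$|R_{n,m}(t)| \le C|t|^3\epsilon\, E[X_{n,m}^2 \mid \mathcal{F}_{n,m-1}] + Ct^2\, E\!\left[X_{n,m}^2\, \mathbf{1}_{[|X_{n,m}|>\epsilon]} \;\big|\; \mathcal{F}_{n,m-1}\right].$$
Summing over $m$, hypothesis (1) bounds the first contribution and hypothesis (2) drives the second to $0$; letting $n\to\infty$ and then $\epsilon\downarrow 0$ gives $\sum_{m=1}^n R_{n,m}(t) \stackrel{P}{\to} 0$ and hence $\sum_{m=1}^n (A_{n,m}(t) - 1) \stackrel{P}{\to} -\tfrac{t^2}{2}\sigma^2$. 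Moreover the Lindeberg condition forces $\max_m |A_{n,m}(t) - 1| \stackrel{P}{\to} 0$, so expansion of $\log(1+z)$ and the standard product-to-sum comparison yield $\prod_{m=1}^n A_{n,m}(t) \stackrel{P}{\to} e^{-t^2\sigma^2/2}$.

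To transfer this to the unconditional characteristic function, on the high-probability event $\{\min_m |A_{n,m}(t)| > 1/2\}$ introduce the complex ratios $M_n^{(k)} := e^{itS_k} \prod_{m=1}^k A_{n,m}(t)^{-1}$, which form an $(\mathcal{F}_{n,k})_{k=0}^n$-martingale of bounded modulus with $M_n^{(0)} = 1$. The martingale identity $E[M_n^{(n)}] = 1$ rearranges to $E[e^{itS_n}] = E[\prod_{m=1}^n A_{n,m}(t)] + o(1)$, and a bounded convergence argument based on $|e^{itS_n}| = 1$ then delivers $E[e^{itS_n}] \to e^{-t^2\sigma^2/2}$. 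The convergence \eqref{eq:mgCLT} follows from L\'evy's continuity theorem.

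The main obstacle is the last step: complex-valued products can misbehave when a factor approaches zero, so one must either work on the high-probability event where all $A_{n,m}(t)$ stay well away from $0$ and estimate the small bad event separately, or, equivalently, work throughout with a continuous branch of $\log$ in a neighborhood of $1$ and sum logarithms. The Taylor and Lindeberg estimates themselves are routine once the splitting at $\epsilon$ is in place.
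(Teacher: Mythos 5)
A point of orientation first: the paper does not actually prove Proposition \ref{prop:mgCLT}; it quotes it from the literature (Durrett; Hall--Heyde), and the only argument supplied in Section \ref{sec:MartCLT} is the Cram\'er--Wold reduction of the multivariate version to this one. So there is no in-paper proof to match your attempt against. What you have written is the classical characteristic-function proof (Brown/McLeish; Hall--Heyde, Theorem 3.2), and the first half of your outline is sound: the martingale-difference property kills the linear term in the expansion of $A_{n,m}(t)$, the split of the third-order remainder at level $\epsilon$ lets hypothesis (1) absorb the small-increment part and hypothesis (2) the large-increment part, and the Lindeberg sum also controls $\max_m E(X_{n,m}^2\mid\mathcal{F}_{n,m-1})$, which gives $\max_m|A_{n,m}(t)-1|\stackrel{P}{\to}0$ and hence $\prod_{m=1}^n A_{n,m}(t)\stackrel{P}{\to}e^{-t^2\sigma^2/2}$.

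The gap is in the transfer step, and the fix you name does not fully repair it. Two issues. First, ``on the high-probability event $\{\min_m|A_{n,m}(t)|>1/2\}$ the ratios form a martingale'' is not a statement about a martingale: a process restricted to a nontrivial event loses the martingale property. The correct device is a stopping time $\tau_n=\min\{k:\ |A_{n,k+1}(t)|\le 1/2\}$ --- legitimate because $A_{n,k+1}(t)$ is $\mathcal{F}_{n,k}$-measurable --- followed by optional stopping and the observation that $P(\tau_n<n)\to0$. Second, and more seriously, keeping every factor away from $0$ does \emph{not} give your claimed bounded modulus: it only yields $\prod_m|A_{n,m}(t)|^{-1}\le 2^n$. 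A bound uniform in $n$ requires $\sum_m|A_{n,m}(t)-1|$, which is of order $t^2V_{n,n}$, to be bounded by a constant, and $V_{n,n}$ is only bounded \emph{in probability}. Hence the stopping time must also fire when the running conditional variance first exceeds, say, $\sigma^2+1$; only then is $|M_n^{(k)}|\le\exp\bigl(Ct^2(\sigma^2+1)\bigr)$, only then does the identity $E[M_n^{(n)}]=1$ combine with a genuinely dominated passage to the limit to give $E\bigl|1-e^{-t^2\sigma^2/2}\prod_m A_{n,m}(t)^{-1}\bigr|\to0$, and only then does $E[e^{itS_n}]\to e^{-t^2\sigma^2/2}$ follow. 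With both truncations inserted your argument is exactly the Hall--Heyde proof and closes; without the variance truncation the final ``bounded convergence argument'' has no dominating bound and the interchange of limits is unjustified.
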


Using the Cram\'er-Wold device, it is not hard to extend the statement
of Proposition \ref{prop:mgCLT} to the multivariate case.

\begin{prop}\label{prop:mgCLTmult} 
Let $\{\BX_{n,m},\mathcal{F}_{n,m},
  1\leq   m\leq n\} $, $\BX_{n,m}=\bigl( X_{n,m,1},\ldots,
  X_{n,m,d}\bigr)^T$,  be a $d$-dimensional square-integrable martingale
  difference   array. Consider the $d\times d$ nonnegative definite
  random matrices 
$$
G_{n,m} = \Bigl( E\bigl(X_{n,m,i} X_{n,m,j}
\big|\mathcal{F}_{n,m-1}\bigr), \, i,j=1,\ldots, d\Bigr), \ \ 
V_n = \sum_{m=1}^n G_{n,m},
$$
and suppose $( A_n)$ is a sequence of $l\times d$ matrices with a bounded
supremum norm.  Assume that 
\begin{enumerate}
\item $A_n V_n A_n^T\stackrel{P}{\to} \Sigma$ as $n\to\infty$ for some
 nonrandom  (automatically nonnegatively definite) matrix $\Sigma$.  
\item $\sum_{m\leq n} E(X^2_{n,m,i}1_{[|X_{n,m,i}| >\epsilon]}|\mathcal{F}_{n,m-1} )
  \stackrel{P}{\to} 0$ as $n\to\infty$ for all $i=1,\ldots, d$ and
  $\epsilon >0$. 
\end{enumerate}
Then in $\mathbb{R}^l$, 
\begin{equation}\label{eq:mgCLTmult}
\sum_{m=1}^n A_n\BX_{n,m} \Rightarrow \BX, \quad (n\to\infty),
\end{equation}
a centered $l$-dimensional Gaussian vector with covariance matrix
$\Sigma$. 
\end{prop}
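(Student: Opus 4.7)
The plan is to follow the hint preceding the statement and reduce the multivariate conclusion to the one-dimensional Proposition~\ref{prop:mgCLT} via the Cram\'er--Wold device. Fix $\bu \in \R^l$; it suffices to show that $\bu^T \sum_{m=1}^n A_n \BX_{n,m} \Rightarrow N(0, \bu^T \Sigma \bu)$ as $n\to\infty$. Setting $\bb_n := A_n^T \bu \in \R^d$, this partial sum equals $\sum_{m=1}^n Y_{n,m}$ with $Y_{n,m} := \bb_n^T \BX_{n,m}$ a one-dimensional martingale difference array with respect to $\{\mathcal{F}_{n,m}\}$. The uniform bound on the supremum norm of $A_n$ ensures that $(\bb_n)$ is uniformly bounded in $\R^d$, and this is the only role played by that hypothesis.

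The conditional-variance condition of Proposition~\ref{prop:mgCLT} is then immediate: by bilinearity,
\[
\sum_{m=1}^n E(Y_{n,m}^2 \mid \mathcal{F}_{n,m-1}) \,=\, \bb_n^T V_n \bb_n \,=\, \bu^T A_n V_n A_n^T \bu \,\stackrel{P}{\to}\, \bu^T\Sigma \bu
\]
by hypothesis~(1), identifying $\sigma^2 = \bu^T \Sigma\bu$ in Proposition~\ref{prop:mgCLT}.

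The main obstacle is verifying the Lindeberg condition for $Y_{n,m}$, because hypothesis~(2) is stated coordinate-by-coordinate while the truncation now involves a linear combination. The tactic I would use is first to dominate $|Y_{n,m}| \le C\max_i |X_{n,m,i}|$ with $C := \sup_n\|\bb_n\|_1 <\infty$, so that $\one_{[|Y_{n,m}|>\epsilon]} \le \sum_i \one_{[|X_{n,m,i}|>\epsilon/C]}$, and then expand $Y_{n,m}^2 \le C^2 \sum_j X_{n,m,j}^2$ by Cauchy--Schwarz. Multiplying, one is left with a double sum over indices $i,j$. The diagonal terms $i=j$ are handled directly by hypothesis~(2). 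For the off-diagonal cross terms $X_{n,m,j}^2 \one_{[|X_{n,m,i}|>\epsilon/C]}$ with $i\neq j$, I would truncate at an auxiliary level $\delta$:
\[
X_{n,m,j}^2 \one_{[|X_{n,m,i}|>\epsilon/C]} \,\le\, X_{n,m,j}^2 \one_{[|X_{n,m,j}|>\delta]} + \delta^2 \one_{[|X_{n,m,i}|>\epsilon/C]},
\]
where the first piece vanishes in probability, after summing conditional expectations, by hypothesis~(2), and the second piece vanishes after applying the Chebyshev-type bound $\one_{[|X|>\tau]} \le \tau^{-2} X^2$ together with hypothesis~(2) again. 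Once the Lindeberg condition is verified, a single application of Proposition~\ref{prop:mgCLT} produces the one-dimensional Gaussian limit, and the Cram\'er--Wold device upgrades it to \eqref{eq:mgCLTmult}.
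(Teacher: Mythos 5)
Your proposal is correct and follows the same route as the paper: the Cram\'er--Wold reduction to Proposition \ref{prop:mgCLT}, the identity $\sum_{m\le n} E\bigl(Y_{n,m}^2\mid\mathcal{F}_{n,m-1}\bigr)=\bu^T A_n V_n A_n^T\bu$ for the conditional-variance condition, and a coordinate-wise domination for the Lindeberg condition. The only divergence is in that last step, where the paper avoids your double sum and auxiliary truncation level $\delta$ by noting that $\max_{i}|X_{n,m,i}|^2\,1_{[\max_{i}|X_{n,m,i}|>\epsilon/\theta]}\le\sum_{i} X_{n,m,i}^2\,1_{[|X_{n,m,i}|>\epsilon/\theta]}$ (the maximum is attained at some coordinate, and on the event in question that same coordinate exceeds the threshold), which settles the Lindeberg condition in one line; your longer cross-term argument with the Chebyshev-type bound is also valid.
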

\begin{proof}
By the Cram\'er-Wold device, it is enough to prove that for any vector
$\bb\in \bbr^l$, 
$$
\sum_{m=1}^n \bb^T A_n\BX_{n,m} \Rightarrow
N\bigl(0, \bb^T\Sigma\bb\bigr)\,. 
$$
Since $\{\bb^T A_n\BX_{n,m},\mathcal{F}_{n,m}, 1\leq
  m\leq n\} $ is a one-dimensional square integrable martingale
  difference array, we only need to check the conditions of
  Proposition \ref{prop:mgCLT}. Since
$$
\sum_{m\leq n} E\Bigl( \bigl( \bb^T A_n\BX_{n,m}\bigr)^2
 \Big|\mathcal{F}_{n,m-1}\Bigr)
= \bb^T A_n V_n A_n^T \bb  \stackrel{P}{\to} \bb^T\Sigma\bb
$$
as $n\to\infty$, the first condition of that proposition holds. For
the second condition, notice that for every $\epsilon >0$ we can write 
$$
 \bigl| \bb^T A_n\BX_{n,m}\bigr| 
\leq d\max_{i=1,\ldots, d} \big| X_{n,m,i} \sum_{j=1}^l b_j
A_n(j,i)\big| \leq \theta \max_{i=1,\ldots, d} \big|
X_{n,m,i}\bigr|\,, 
$$
where $\theta$ is a finite positive constant depending on the vector
$\bb$ and an upper bound on the supremum norm of the sequence
$(A_n)$. Therefore,
$$
\sum_{m\leq n} E\Bigl( \bigl( \bb^T A_n\BX_{n,m}\bigr)^2 1_{\bigl|
  \bb^T A_n\BX_{n,m}\bigr|>\epsilon}  \Big|\mathcal{F}_{n,m-1}\Bigr) 
$$
$$
\leq \theta^2 \sum_{m\leq n} E\Bigl( \max_{i=1,\ldots, d} \big|
X_{n,m,i}\bigr|^2 1_{\max_{i=1,\ldots, d} |
X_{n,m,i}|>\epsilon/\theta}\Big|\mathcal{F}_{n,m-1}\Bigr) 
$$
$$
\leq \theta^2 \sum_{i=1}^d \sum_{m\leq n} E\bigl( X_{n,m,i}^2
1_{| X_{n,m,i}|>\epsilon/\theta}\big|\mathcal{F}_{n,m-1}\bigr)  
\to 0
$$
by the second assumption of the present proposition. This verifies the
assumptions of Proposition \ref{prop:mgCLT} and, hence, completes the
argument. 
\end{proof}

\section{Asymptotic normality of degree counts}\label{sec:counts}

For $k\geq 1$, let $N_n(k) $ be the number of nodes in $G_n$ with
degree $k$:
$$N_n(k)=\sum_{v\in V_n} 1_{[D_n(v)=k]}.$$
Using concentration inequalities and martingale methods it is shown,
for instance in \cite[Chapter 8]{vanderHofstad:2014} that there is a
probability mass function $\{p_k, k\geq 1\}$ such that almost surely
as $n\to\infty$,
\begin{equation} \label{e:freq.conv}
\frac{N_n(k)}{n} \to p_k,\quad k\geq 1\,,
\end{equation}
and
\begin{equation} \label{e:p.k}
p_k = \Bigl( \frac{(2+\delta)\Gamma(3+2\delta)    }{\Gamma(1+\delta)    }
\Bigr)
\frac{\Gamma(k+\delta)  }{\Gamma (k+3+2\delta)} =c(\delta) 
\frac{\Gamma(k+\delta)  }{\Gamma (k+3+2\delta)}.
\end{equation}
Of course $\Gamma (\cdot)$ is the gamma function. In particular, for
$k=1 $
$$p_1=\frac{2+\delta}{3+2\delta},$$
and as $k\to\infty$, power law behavior is
$$p_k \sim c(\delta) k^{-3-\delta}.$$

We consider the asymptotic normality of 
${N_n(k)}/{n} - p_k$ for $ k\geq 1$, and we start with the simplest
case of $k=1$.

When proving the asymptotic normality for the number of nodes of
degree 1, we will use the abbreviations
$$\nu_n=E(N_n(1)),\quad \gamma=\frac{1+\delta}{2+\delta},\quad
w_n=\frac{n}{n+\gamma} .$$
Note $N_1(1)=\nu_1=0$ since the initial node has a self-loop so
$D_1(1)=2$.

Let $\mathcal{F}_n$ be the information from observing the growth of
the network up through the $n$th stage. Then
\begin{align} \label{e:dyn.1}
E(N_{n+1}(1)|\mathcal{F}_n)&=N_n(1) +
                             E\bigl((N_{n+1}(1)-N_n(1))|\mathcal{F}_n\bigr)
  \notag \\
&=N_n(1) +
                             E\bigl(1_{[n+1 \text{ links to }v\in V_n;
  D_n(v)>1]}|\mathcal{F}_n\bigr) \notag \\
&=N_n(1) +1-P[n+1 \text{ links to itself }|\mathcal{F}_n ] \notag \\
{}&\qquad -P[ n+1 \text{ links to }v\in V_n; D_n(v)=1 | \mathcal{F}_n
    ] \notag \\
\intertext{and using \eqref{eq:attach2}  and the fact that for the
  last term $D_n(v)=1$, we get}
&=N_n(1) +1-   \frac{1+\delta}{n(2+\delta)+(1+\delta)} 
-\frac{1+\delta}{n(2+\delta)+(1+\delta)} N_n(1)\\
&=N_n(1)(1-\frac{\gamma}{n+\gamma})  +(1-\frac{\gamma}{n+\gamma})
  \notag \\
&=w_n N_n(1)+w_n. \notag
\end{align}
We conclude
\begin{align}
E(N_{n+1}(1)|\mathcal{F}_n)&=w_n N_n(1) +w_n , \label{eq:conditVer}\\
\nu_{n+1}&= w_n \nu_n + w_n. \label{eq:expVer}
\end{align}

We claim
\begin{equation}\label{eq:MG}
M_{n+1}:=\frac{N_{n+1}(1) -\nu_{n+1}}{\prod_{j=1}^n w_j}
=\frac{N_{n+1}(1) -\sum_{l=1}^n \prod_{j=l}^n w_j}
{\prod_{j=1}^n w_j} , \quad n \geq 1
\end{equation}
is a martingale. This is  verified using \eqref{eq:conditVer}
and \eqref{eq:expVer}.

Consider now the martingale difference
\begin{align}
d_{n+1}:=& M_{n+1}-M_n=\frac{N_{n+1}(1) -\nu_{n+1}}{\prod_{j=1}^n w_j}
-\frac{N_{n}(1) -\nu_{n}}{\prod_{j=1}^{n-1} w_j}\nonumber\\
=&\frac{1}{\prod_{j=1}^n w_j} \Bigl( N_{n+1}(1)-\nu_{n+1}-(w_nN_n(1)-w_n
  \nu_n)\Bigr)\nonumber\\
=&\frac{1}{\prod_{j=1}^n w_j} \Bigl(
   N_{n+1}(1)-N_n(1)+N_n(1)(1-w_n)-w_n\Bigr). \label{eq:d}
\end{align}
As above, 
$$N_{n+1}(1)-N_n(1)=:\Delta_{n+1} =1_{[n+1 \text{ links with }v\in
  V_n;D_n(v)>1]},$$
and $\Delta_{n+1}^2=\Delta_{n+1}$. Therefore,
\begin{align*}
d^2_{n+1} =&\frac{1}{(\prod_{j=1}^n w_j)^2} \Bigl(
\Delta_{n+1} +2\Delta_{n+1}\bigl(N_n(1)(1-w_n)-w_n\bigr)\\
&\qquad+\bigl(N_n(1)(1-w_n)-w_n
\bigr)^2
\Bigr)\\
=&\frac{1}{(\prod_{j=1}^n w_j)^2} \Bigl(
\Delta_{n+1} \{1+2\bigl((1-w_n)N_n(1)-w_n\bigr)\} \\
&\qquad +\bigl( (1-w_n)N_n(1)-w_n\bigr)^2
\Bigr).
\end{align*}
Since
$$E(\Delta_{n+1}|\mathcal{F}_n)=
E(N_{n+1}(1)-N_n(1) |\mathcal{F}_n)=1-\frac{\gamma}{n+\gamma}
(1+N_n(1)),$$
we have
\begin{align}
E(d^2_{n+1} |\mathcal{F}_n)=&
\frac{1}{(\prod_{j=1}^n w_j)^2} \Bigl(
\{1+2\bigl((1-w_n)N_n(1)-w_n\bigr)\}
                              \bigl(1-\frac{\gamma}{n+\gamma}(1+N_n(1)\bigr) 
\nonumber \\&\quad+[(1-w_n)N_n(1)-w_n]^2
\Bigr) \nonumber\\
=&: \frac{1}{(\prod_{j=1}^n w_j)^2}  \chi_n.\label{eq:conditVar}
\end{align}

We need the following observations:
\begin{enumerate}
\item[(a)] $w_n=n/(n+\gamma) \to 1, $ as $n\to\infty$.
\item[(b)] $1-w_n=\gamma/(n+\gamma)\sim \gamma/n\to 0.$
\item[(c)] Owing to the usual recursion on the gamma function,
$$\frac{\Gamma(n+1+\gamma)}{\Gamma(1+\gamma)}=
\prod_{j=0}^{n-1}(n+\gamma-j)=\prod_{l=1}^n(\gamma+l).$$
\item[(d)] Therefore we have
\begin{align}
\prod_{j=1}^n w_j =&\prod_{j=1}^n \Bigl(\frac{j}{j+\gamma}\Bigr)
=\frac{\Gamma (n+1)}{\Gamma (n+1+\gamma)} \Gamma(1+\gamma)\nonumber\\
\sim & \Gamma(1+\gamma) n^{-\gamma} =\Gamma (\frac{1}{p_1})
       n^{-\gamma}\label{eq:w} 
\end{align}
by Stirling's formula.
\item[(e)] We have
$$\lim_{n\to \infty} \frac{N_n(1)}{n}=\lim_{n\to\infty} \frac{\nu_n}{n}=p_1=\frac{2+\delta}{3+2\delta}.$$
\end{enumerate}

This allows us to evaluate $\lim_{n\to\infty} \chi_n$ in
\eqref{eq:conditVar} as
\begin{align}
c_0(\delta):=&\Bigl( 1+2(\gamma p_1 -1)][1-\gamma p_1]\Bigr) + (\gamma
             p_1 -1)^2 
=\gamma p_1(1-\gamma p_1)\nonumber\\ =&\Bigl(\frac{1+\delta}{3+2\delta}
\Bigr) \Bigl( \frac{2+\delta}{3+2\delta} \Bigr) =
\frac{(1+\delta)(2+\delta)}{(3+2\delta)^2}. \label{eq:c}
\end{align}
Therefore  from \eqref{eq:w}, \eqref{eq:conditVar} and \eqref{eq:c}
\begin{equation}\label{eq:Ed2}
E(d^2_{n+1} |\mathcal{F}_n) \sim 
\frac{c_0(\delta)}{(\Gamma(1+\gamma)n^{-\gamma})^2}
=:c_1(\delta) n^{2\gamma}
\end{equation}
with
$$c_1(\delta)=c_0(\delta)/(\Gamma(1+\gamma))^2.$$
Keeping in mind that the asymptotic equivalence in \eqref{eq:Ed2}
holds a.s, we apply Karamata's theorem on integration (eg, \cite[page
25]{resnick:2007} or \cite{bingham:goldie:teugels:1989}), to obtain 
\begin{equation}\label{eq:lim}
\sum_{j=1}^n E(d_{j+1}^2|\mathcal{F}_j) \sim
\frac{c_1(\delta)}{2\gamma+1} n^{2\gamma +1}=:\sigma^2 (\delta) n^{2\gamma +1}
,\quad (n\to\infty),
\end{equation}
with 
\begin{equation}\label{eq:sigma2}
\sigma^2(\delta)= \frac{(1+\delta)(2+\delta)}
{(3+2\delta)^2 (2\gamma+1) (\Gamma ( 1+\gamma))^2
}.
\end{equation}

Now set
$$d_{n,m}= \frac{d_m}{\sigma (\delta) n^{\gamma +1/2}}, \quad 1 \leq m
  \leq n,
$$
and the first condition of Proposition \ref{prop:mgCLT} is satisfied.
For the second condition of Proposition \ref{prop:mgCLT} observe that
from \eqref{eq:d}, we get by ignoring constants and remembering
$|\Delta_{m+1}|\leq 1$, that, as $m\to\infty$, 
\begin{align*}
[|d_{n,m+1} |>\epsilon]=&[  |d_{m+1} |>\epsilon n^{\gamma +1/2}]\\
\subset &
[\frac{1}{\prod_{j=1}^{m} w_j }| [1+N_m(1) (1-w_m) -w_m |>\epsilon
          n^{\gamma + 1/2}]\\
\subset & [cm^\gamma  |(const)| >\epsilon n^{\gamma +1/2}] \\
\subset & [cn^\gamma  |(const)| >\epsilon n^{\gamma +1/2}]. 
\end{align*}
So with probability converging to 1 as $n\to\infty$, all the indicator
functions  $1_{
[|d_{n,m} |>\epsilon]}$ vanish, and this verifies the
  second condition for the central limit theorem.

We conclude from Proposition \ref{prop:mgCLT} that \eqref{eq:mgCLT} holds.
Unpacking \eqref{eq:mgCLT}, we find
\begin{align*}
\sum_{m=1}^n d_{n,m}=& \sum_{m=1}^n \frac{d_m}{\sigma
  (\delta)n^{\gamma+1/2} }
=\frac{1}{\sigma( \delta) n^{\gamma+1/2}} \Bigl(\frac{N_n
  (1)-\nu_n}{\prod_{j=1}^{n-1} w_j } \Bigr ).
\end{align*}
Use \eqref{eq:w} to get
$$\sqrt n  \Bigl(\frac{N_n(1)}{n} -\frac{\nu_n}{n}
\Bigr)=
\frac{N_n-\nu_n}{\sqrt n} \Rightarrow N(0,\sigma_1^2(\delta)),$$
where $\sigma_1(\delta)=\Gamma (1+\gamma) \sigma(\delta).$
Since there exists $K$ such that 
$$\bigvee_{n=1}^\infty  |\nu_n -np_1| \leq K,$$ (eg, \cite[Section
8.5]{vanderHofstad:2014}),
we may conclude the
following.
\begin{prop}\label{prop:clt(k=1)} The number of nodes at stage $n$
  with degree 1 is asymptotically normal,
$$\sqrt n \Bigl(\frac{N_n(1)}{n} -p_1 \Bigr) \Rightarrow N(0,
\sigma^2_1(\delta)),$$ 
where  
$$\sigma_1^2(\delta)=\frac{(1+\delta)(2+\delta)^2}{(3+2\delta)^2 (4+3\delta)}.$$
\end{prop}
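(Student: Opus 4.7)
The plan is to express the fluctuation of $N_n(1)$ in terms of a square-integrable martingale, apply the martingale CLT of Proposition~\ref{prop:mgCLT}, and then absorb the gap between $\nu_n$ and $np_1$ via a classical estimate. First I would linearize the one-step dynamics: expanding the conditional expectation in \eqref{e:dyn.1} using only the attachment probabilities \eqref{eq:attach1}--\eqref{eq:attach2} yields the AR(1) recursion $E(N_{n+1}(1)|\mathcal{F}_n) = w_n N_n(1) + w_n$, and subtracting the deterministic counterpart $\nu_{n+1} = w_n \nu_n + w_n$ together with dividing by the telescoping product $\prod_{j=1}^n w_j$ kills the autoregressive factor and exhibits the martingale $M_n$ of \eqref{eq:MG}.

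Next I would control the quadratic variation of the differences $d_{n+1} = M_{n+1} - M_n$. Since $\Delta_{n+1} = N_{n+1}(1) - N_n(1)$ is an indicator one has $\Delta_{n+1}^2 = \Delta_{n+1}$, and the conditional second moment collapses to the explicit expression \eqref{eq:conditVar}. The asymptotic evaluation of $E(d_{n+1}^2|\mathcal{F}_n)$ rests on three ingredients already in hand: the a.s.\ limit $N_n(1)/n \to p_1$ from \eqref{e:freq.conv}, Stirling's formula for $\prod_{j=1}^n w_j \sim \Gamma(1+\gamma)n^{-\gamma}$ from observation (d), and Karamata's theorem on sums of regularly varying terms. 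Together these give $\sum_{j=1}^n E(d_{j+1}^2|\mathcal{F}_j) \sim \sigma^2(\delta) n^{2\gamma+1}$ as in \eqref{eq:lim}, so that the array $d_{n,m} = d_m/(\sigma(\delta)n^{\gamma+1/2})$ satisfies the first hypothesis of Proposition~\ref{prop:mgCLT} with limiting conditional variance $1$. The Lindeberg-type second hypothesis is automatic: the bound $|\Delta_{n+1}| \le 1$ forces $|d_{n,m+1}| = O(m^\gamma/n^{\gamma+1/2})$ uniformly in $m \le n$, so for each $\epsilon > 0$ the truncation indicators vanish with probability tending to one.

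Proposition~\ref{prop:mgCLT} then yields $\sum_{m\le n} d_{n,m} \Rightarrow N(0,1)$, and unwinding the definitions together with the Stirling asymptotic \eqref{eq:w} gives $(N_n(1) - \nu_n)/\sqrt{n} \Rightarrow N(0, \sigma_1^2(\delta))$ with $\sigma_1(\delta) = \Gamma(1+\gamma)\sigma(\delta)$. A short algebraic simplification using $2\gamma + 1 = (4+3\delta)/(2+\delta)$ converts this into the closed form claimed in the proposition. The last step is to move the centering from $\nu_n/n$ to $p_1$: since $\sup_n |\nu_n - np_1| \le K$ by \cite[Section~8.5]{vanderHofstad:2014}, one has $\sqrt{n}(\nu_n/n - p_1) \to 0$, and Slutsky's theorem upgrades the convergence to the statement.

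The delicate step is the conditional variance computation. The expression \eqref{eq:conditVar} contains two $O(1)$ contributions that, after substituting $(1-w_n) \sim \gamma/n$ and $N_n(1)/n \to p_1$, cancel to leave the compact limit $c_0(\delta) = \gamma p_1(1 - \gamma p_1)$; tracking signs and coefficients of the quadratic-in-$N_n(1)$ terms carefully is where an arithmetic slip is most likely to occur, and it is also the place where the particular algebraic identity that produces the final $(4+3\delta)$ factor first appears.
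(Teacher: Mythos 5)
Your proposal is correct and follows essentially the same route as the paper: the martingale $M_n$ from \eqref{eq:MG}, the conditional variance computation \eqref{eq:conditVar} combined with $N_n(1)/n\to p_1$, Stirling's formula \eqref{eq:w} and Karamata's theorem to get \eqref{eq:lim}, the boundedness of $\Delta_{n+1}$ for the Lindeberg condition, and the $\sup_n|\nu_n-np_1|\le K$ bound to shift the centering. The algebraic check $2\gamma+1=(4+3\delta)/(2+\delta)$ correctly recovers the stated $\sigma_1^2(\delta)$.
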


\section{Normality for the number of nodes of degree $k$,
  $k>1$.}\label{subsec:k}

The results of Section \ref{sec:counts} show how the martingale
central limit theorem can be used to prove that the deviation of the
fraction of the nodes in the $n$th graph, that have degree 1, from their limiting
fraction, is of the order $n^{-1/2}$ and, when normalized by that
quantity, has a limiting centered normal distribution; this is the
content of Proposition \ref{prop:clt(k=1)}. It turns out that this
distributional result is valid for all node degrees
simultaneously. More specifically, a central limit theorem in
$\bbr^\bbn$ holds, and the limit is a centered Gaussian process. 
\begin{thm} \label{t:main}
The proportion of nodes with given degrees satisfy the limiting
distributional relation
\begin{equation} \label{e:CLT.RN}
\left( \sqrt n \Bigl(\frac{N_n(k)}{n} -p_k \Bigr), \
  k=1,2,\ldots \right) \Rightarrow     \bigl( Z_k, \, k=1,2\ldots\bigr)
\end{equation}
in $\bbr^\bbn$, where $\bigl( Z_k, \, k=1,2\ldots\bigr)$ is a centered
Gaussian process with covariance function $R_Z$ given by
\eqref{final.cov}. 
\end{thm}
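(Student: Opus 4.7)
The plan is to mimic the one-dimensional martingale argument of Section \ref{sec:counts} in a vector setting and then reduce the infinite-dimensional statement to its finite-dimensional projections. Since weak convergence in $\bbr^\bbn$ under the product topology is characterized by convergence of all finite-dimensional projections, and since a consistent family of centered Gaussian marginals determines a unique Gaussian process via Kolmogorov's extension theorem, it suffices to prove, for each fixed $K\geq 1$, that
\[
\Bigl(\sqrt n \bigl(N_n(k)/n - p_k\bigr)\Bigr)_{1\leq k\leq K} \Rightarrow (Z_1,\ldots,Z_K),
\]
where $(Z_1,\ldots,Z_K)$ is centered Gaussian with some covariance $\Sigma_K$. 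The entries of $\Sigma_K$ will be read off from the calculation below, which then yields the covariance $R_Z$ claimed in the statement.

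To carry out the finite-dimensional step, set $\bY_n=(N_n(1),\ldots,N_n(K))^T$ and extend \eqref{e:dyn.1} coordinatewise. Enumerating degree transitions using \eqref{eq:attach1} and \eqref{eq:attach2}---a degree-$k$ vertex becomes a degree-$(k+1)$ vertex exactly when the new vertex attaches to it, and the new vertex itself is of degree $1$ or $2$---produces a linear recursion
\[
E(\bY_{n+1}\mid\mathcal{F}_n) = A_n \bY_n + \bb_n, \qquad A_n = I + B/T_n,
\]
with $T_n=n(2+\delta)+(1+\delta)$, $B$ a fixed $K\times K$ lower bidiagonal matrix whose diagonal entries are $-(k+\delta)$ and whose subdiagonal entries are $(k-1+\delta)$, and $\bb_n$ a deterministic forcing encoding the contribution of the freshly arriving vertex. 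Passing to centered variables yields the $\bbr^K$-valued martingale difference sequence
\[
\bR_{n+1} := \bigl(\bY_{n+1}-E\bY_{n+1}\bigr) - A_n\bigl(\bY_n-E\bY_n\bigr),
\]
whose coordinates are bounded by $2$ since each step modifies at most one vertex and at most one non-loop edge. Iterating gives $\bY_n - E\bY_n = \sum_{m=1}^{n-1} P_{n,m+1}\bR_{m+1}$ with $P_{n,j} := A_{n-1}A_{n-2}\cdots A_j$.

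Next apply Proposition \ref{prop:mgCLTmult} to the array $\BX_{n,m}=\bR_{m+1}$, with the proposition's matrices $A_n$ chosen so as to absorb the row scaling $P_{n,m+1}/\sqrt n$. Condition (2) (Lindeberg) is automatic since the $\bR_{m+1}$ are uniformly bounded. For condition (1) one computes the one-step conditional covariance
\[
\Gamma_m := E\bigl(\bR_{m+1}\bR_{m+1}^T \mid \mathcal{F}_m\bigr),
\]
which is an explicit deterministic function of $(N_m(k))_{k\leq K+1}$ with entries of order $1/m$. By \eqref{e:freq.conv}, $m\Gamma_m$ converges almost surely to a rational function $\Gamma_\infty$ of $p_1,\ldots,p_{K+1}$. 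The triangular $A_n$ has eigenvalues $1-(k+\delta)/T_n$, so by Stirling (observations (c)--(d) of Section \ref{sec:counts}) individual entries of $P_{n,m}$ behave like $(n/m)^{-(k+\delta)/(2+\delta)}$, and Karamata's theorem evaluates
\[
\frac{1}{n}\sum_{m=1}^{n-1} P_{n,m+1}\,\Gamma_m\, P_{n,m+1}^T \longrightarrow \Sigma_K,
\]
furnishing the target limit. Centering by $np$ rather than $E\bY_n$ is harmless: the bound $\sup_n|E N_n(k)-np_k|<\infty$ for every $k$ follows by iterating each component of the expectation recursion as cited from \cite[Section 8.5]{vanderHofstad:2014} at the end of Section \ref{sec:counts}, so the bias is $o(\sqrt n)$.

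The main technical obstacle is the computation in the previous paragraph: although the bidiagonal structure of $A_n$ and Stirling give every entry of $P_{n,m}$ in closed form, extracting a compact expression for $R_Z(k,\ell)$ from the resulting path sums of Gamma-function ratios, and doing so in a way that clearly reveals consistency across the truncation level $K$, requires careful algebra. The remaining pieces---the matrix martingale CLT, the deterministic Karamata estimates, the bias control, and the passage from finite-dimensional marginals to $\bbr^\bbn$---are each routine given the framework of Section \ref{sec:counts} and Proposition \ref{prop:mgCLTmult}.
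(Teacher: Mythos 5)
Your architecture is a legitimate variant of the paper's: you keep the vector recursion $E(\bY_{n+1}\mid\mathcal{F}_n)=A_n\bY_n+\bb_n$ and propagate the innovations through the matrix products $P_{n,m}$, whereas the paper first diagonalizes the bidiagonal drift by forming the explicit scalar martingales $M_n^{(k)}=a_n^{(k)}\sum_j b_j^{(k)}(N_n(j)-\nu_n(j))$ of \eqref{e:M.n} (the $b_j^{(k)}$ are precisely the left eigenvector coefficients of your matrix $B$, and $a_n^{(k)}$ is the reciprocal of the product of the corresponding eigenvalue factors), applies the multivariate martingale CLT to those decoupled coordinates, and only then inverts the triangular change of variables. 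The two routes are morally equivalent; the paper's pays off because the decoupling plus telescoping Gamma-function identities is what makes the closed form \eqref{final.cov} extractable. However, there is a concrete error in your variance step that, taken literally, destroys the result. The one-step conditional covariance $\Gamma_m=E(\bR_{m+1}\bR_{m+1}^T\mid\mathcal{F}_m)$ is \emph{not} of order $1/m$: the increments $\Delta_{m+1}(k)$ take values in $\{-1,0,1\}$ with conditional probabilities $P(A_m(k)\mid\mathcal{F}_m)=\tfrac{k+\delta}{m(2+\delta)+1+\delta}N_m(k)\to\tfrac{k+\delta}{2+\delta}p_k$, which are bounded away from $0$, so $\Gamma_m\to\Gamma_\infty$ a.s.\ with $\Gamma_\infty$ a nondegenerate constant matrix (this is exactly the paper's $\chi_n\to c_0(\delta)$ in the $k=1$ case). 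If $m\Gamma_m$ converged, your Karamata sum $\tfrac1n\sum_m P_{n,m+1}\Gamma_m P_{n,m+1}^T$ would be $O(\log n/n)\to 0$ and you would obtain $\Sigma_K=0$. With the corrected order the computation does produce a nontrivial limit, but you must fix the bookkeeping throughout.

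Two smaller points. First, $P_{n,m+1}$ depends on $m$, so it cannot be ``absorbed into the proposition's $A_n$'' in Proposition \ref{prop:mgCLTmult}, which applies a single matrix to every summand; you must instead build the $m$-dependence into the array itself, e.g.\ $\BX_{n,m}:=P_{n,m+1}\bR_{m+1}/\sqrt n$ with $A_n=I$, and then justify that the entries of $P_{n,m+1}$ are uniformly bounded (true, via the eigen-decomposition, since the diagonal rates $a_m^{(k)}/a_n^{(k)}\le 1$) so that the Lindeberg condition still follows from $|\bR_{m+1}|\le 2$. Second, the theorem as stated asserts that the covariance is the specific expression \eqref{final.cov}; you defer exactly this derivation as ``careful algebra,'' but it is the bulk of the paper's proof (the inversion $D_k=C_k^{-1}$ via the identity \eqref{e:ident.b}, the Beta-integral representation of $R_{Y,k}$, and the telescoping sums). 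Without it you have proved convergence to \emph{some} centered Gaussian process, not the stated theorem.
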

\begin{rem}
We precede the proof of the theorem with a number of comments.

First of all, weak convergence in $\bbr^\bbn$ is equivalent to
convergence of the finite-dimensional distributions; see
\cite{billingsley:1968}. 

It is clear also that, for every fixed $n$, the stochastic process in
the left hand side of \eqref{e:CLT.RN} will have at most $n$ random
elements; however, all elements in the limiting process are random.

Finally, the variance of $Z_1$ in the right hand side of
\eqref{e:CLT.RN} is given in Proposition \ref{prop:clt(k=1)}. 
\end{rem}
\begin{proof}[Proof of Theorem \ref{t:main}]
As in the one-dimensional case of Section \ref{sec:counts}, we will
use a martingale central limit theorem, so we start with constructing
a suitable martingale for each fixed node degree. For $k=1,2\ldots$
we denote $\nu_n(k)=E(N_n(k))$, $n\geq k$  (so that
$\nu_n=\nu_n(1)$). It follows from \eqref{e:freq.conv} and  bounded
convergence,
\begin{equation} \label{e:mean.conv}
\frac{\nu_n(k)}{n} \to p_k,\quad k\geq 1\,.
\end{equation}
Let 
\begin{equation} \label{e:a.nk}
a_n^{(k)} = \left[ \prod_{i=k}^{n-1} \left(
    1-\frac{k+\delta}{i(2+\delta)+1+\delta}\right) \right]^{-1}, \
n\geq k,
\end{equation}
so that
\begin{equation}\label{e:recura}
a_{n+1}^{(k)} =a_n^{(k)} \Bigl(
1-\frac{k+\delta}{n(2+\delta)+1+\delta}\Bigr)^{-1},
\end{equation}
and for future use, note that by the
Stirling formula, 
\begin{align} 
a_{n}^{(k)} =& \frac{\Gamma\bigl( n+(1+\delta)/(2+\delta)\bigr)
\Gamma\bigl( k+(1-k)/(2+\delta)\bigr)}
{\Gamma\bigl( k+(1+\delta)/(2+\delta)\bigr)
\Gamma\bigl( n+(1-k)/(2+\delta)\bigr)}\label{e:norm.a}\\
\sim& \frac{\Gamma\bigl( k+(1-k)/(2+\delta)\bigr)}
{\Gamma\bigl( k+(1+\delta)/(2+\delta)\bigr)}
n^{(k+\delta)/(2+\delta)} \qquad (n\to\infty)\notag
\end{align}
so that as a function of $n$, $a_n^{(k)} $ is  regularly varying
 with index $(k+\delta)/(2+\delta)$. Also define,
\begin{equation} \label{e:b.nk}
b_j^{(k)} =  \prod_{i=j}^{k-1}  \frac{i+\delta}{i-k} =
(-1)^{k-j}\frac{\Gamma(k+\delta)}{(k-j)!\, \Gamma(j+\delta)}, \ 1\leq
j\leq k.
\end{equation}
We use the usual conventions to set $a_k^{(k)}=b_k^{(k)}=1$, and set
\begin{equation} \label{e:M.n}
M_n^{(k)} = a_n^{(k)} \sum_{j=1}^k b_j^{(k)} \bigl( N_n(j)-
\nu_n(j)\bigr), \ n\geq k\,.
\end{equation}
The process $(M_n^{(1)} )$ coincides
with the process $(M_n)$ defined in \eqref{eq:MG} and we already proved
the latter process is a martingale with respect to the filtration
$(\mathcal{F}_n)$.  Now we check  that for each $k\geq 1$ the
process $\bigl(M_n^{(k)}, \, n\geq k \bigr)$ is a martingale with
respect to the same filtration.  

Recall the dynamics of the of the counting processes $(N_n(k))$: for
each fixed $n$,  there is a partition of the sample space 
into disjoint events $A_n(k)$, $k=0,1,\ldots$, $n\geq 1$, $n\geq k$,
with 
\begin{equation} \label{e:sel.prob}
P(A_n(k) |\mathcal{F}_n) = 
\left\{ \begin{array}{ll} 
\frac{k+\delta}{n(2+\delta)+(1+\delta)}
N_n(k), & k=1,\ldots, n\\ 
\frac{1+\delta}{n(2+\delta)+(1+\delta)}, &k=0\,.
\end{array}
\right.
\end{equation}
The event $A_n(k)$ is the event that a new node appears at stage $n+1$
and attaches to $v\in V_n$ with $D_n(v)=k$ while $A_n(0)$ is the event
that a new node attaches to itself.
In terms of these events, for
each $k\geq 3$ and $n\geq k$,
\begin{equation} \label{e:dynN.ge3}
N_{n+1}(k) = \left\{ \begin{array}{ll}
N_n(k)+1 & \text{on $A_n(k-1)$,} \\ 
N_n(k)-1 & \text{on $A_n(k)$,} \\ 
N_n(k) & \text{on $\cup_{j\notin \{ k-1,k\}} A_n(j),$}
\end{array} \right.
\end{equation}
for $k=2$ and $n\geq 2$,
\begin{equation} \label{e:dynN.2}
N_{n+1}(2) = \left\{ \begin{array}{ll}
N_n(2)+1 & \text{on $A_n(0)\cup A_n(1)$,} \\ 
N_n(2)-1 & \text{on $A_n(2)$,} \\ 
N_n(2) & \text{on $\cup_{j\geq 3} A_n(j),$}
\end{array} \right.
\end{equation}
while for $k=1$ and $n\geq 1$,
\begin{equation} \label{e:dynN.1}
N_{n+1}(1) = \left\{ \begin{array}{ll}
N_n(1)+1 & \text{on $\cup_{j\geq 2} A_n(j)$ ,} \\ 
N_n(1) & \text{on  $A_n(0)\cup A_n(1)$.}
\end{array} \right.
\end{equation}
In particular, for  $n\geq k$,
\begin{align} 
E(N_{n+1}(k)|\mathcal{F}_n) &= \left(1-
  \frac{k+\delta}{n(2+\delta)+(1+\delta)} \right) N_n(k) \label{e:dyn.kge3}\\
&\qquad +
\frac{k-1+\delta}{n(2+\delta)+(1+\delta)}  N_n(k-1), \ k\geq 3\,,
 \notag\\
E(N_{n+1}(2)|\mathcal{F}_n) &= \left(1-
  \frac{2+\delta}{n(2+\delta)+(1+\delta)} \right) N_n(2) \label{e:dyn.ke2}\\
&\qquad+
\frac{1+\delta}{n(2+\delta)+(1+\delta)}  \bigl( 1+N_n(1)\bigr)\,.
 \notag
\end{align}
Taking the expectation, we see that
\begin{align} 
\nu_{n+1}(k) &= \left(1-
  \frac{k+\delta}{n(2+\delta)+(1+\delta)} \right) \nu_n(k) \label{e:dyn.mkge3}\\
&\qquad +
\frac{k-1+\delta}{n(2+\delta)+(1+\delta)}  \nu_n(k-1), \ k\geq 3\,,  
 \notag \\
\nu_{n+1}(2) &= \left(1-
  \frac{2+\delta}{n(2+\delta)+(1+\delta)} \right) \nu_n(2)  \label{e:dyn.mke2}\\
&\qquad +
\frac{1+\delta}{n(2+\delta)+(1+\delta)}  \bigl( 1+\nu_n(1)\bigr)\,.
 \notag
\end{align}
The corresponding dynamics for $k=1$ is given in \eqref{e:dyn.1}. 
Therefore,  for $n\geq k$,
\begin{align*}
E\bigl( M_{n+1}^{(k)}\big| \mathcal{F}_n\bigr) 
&= a_{n+1}^{(k)} \sum_{j=1}^k b_j^{(k)} E\bigl[ \bigl( N_{n+1}(j)-
\nu_{n+1}(j)\bigr)\big| \mathcal{F}_n\bigr] \\
&= a_{n+1}^{(k)}  b_1^{(k)} \frac{n}{n+\gamma} \bigl( N_{n}(1)-
\nu_{n}(1)\bigr) \\
&\qquad + a_{n+1}^{(k)} \sum_{j=2}^k b_j^{(k)} \left[ \left(1-
  \frac{j+\delta}{n(2+\delta)+(1+\delta)} \right) \bigl(
N_n(j)-\nu_n(j)\bigr) \right. \\
& \left. 
\qquad + \frac{j-1+\delta}{n(2+\delta)+(1+\delta)} 
\bigl( N_n(j-1)-\nu_n(j-1)\bigr)\right] \\
 &= a_{n+1}^{(k)} \Biggl\{\sum_{j=1}^{k-1}\left[  b_j^{(k)} 
\left(1-  \frac{j+\delta}{n(2+\delta)+(1+\delta)} \right) \right. \\
&\left. 
\qquad + b_{j+1}^{(k)}\frac{j+\delta}{n(2+\delta)+(1+\delta)} \right] \bigl( N_{n}(j)-
\nu_{n}(j)\bigr) \\
&\qquad + b_k^{(k)} 
\left(1-  \frac{k+\delta}{n(2+\delta)+(1+\delta)} \right) \bigl( N_{n}(k)-
\nu_{n}(k)\bigr) \Biggr\}\,,
\end{align*}
and elementary calculations show that this is the same as the right
hand side of \eqref{e:M.n}. Therefore  for each $k$,  the process
$\bigl(M_n^{(k)}, 
\, n\geq k \bigr)$ is indeed
 a martingale with respect to the filtration
$(\mathcal{F}_n)$. 

For $k=1,2,\ldots$ define a
$k$-variate triangular array of martingale differences  by
\begin{equation} \label{e:mart.diff}
X_{n,m,j}=
\frac{M_{m}^{(j)}-M_{m-1}^{(j)}}{a_n^{(j)}n^{1/2}}, \ 
m=k+1,\ldots, n, \ j=1,\ldots, k\,, 
\end{equation}
for $n=k,k+1,\ldots$. In order to use the multivariate martingale
central limit theorem in Proposition \ref{prop:mgCLTmult}, we compute
 the asymptotic form of the quantities      
\begin{align} 
G_{n,m}(i,j) :=& E\bigl( X_{n,m,i}X_{n,m,j}\big| \cF_{m-1}\bigr) \label{e:mart.diff}\\
=&  \bigl( a_n^{(i)}a_n^{(j)}n\bigr)^{-1}
E\Bigl(  \bigl( M_{m}^{(i)}-M_{m-1}^{(i)}\bigr)\bigl(
M_{m}^{(j)}-M_{m-1}^{(j)} \bigr) \Big| \cF_{m-1}\Bigr)\,, \nonumber
\end{align}
$m=k+1,\ldots, n$, $i,j=1,\ldots, k$. 
By the martingale property,
\begin{align}  E\Bigl(  &\bigl( M_{m+1}^{(i)}-M_m^{(i)}\bigr)\bigl(
M_{m+1}^{(j)}-M_m^{(j)} \bigr) \Big| \cF_m\Bigr) \label{e:incr} \\
&= E\left[  \sum_{d=1}^i b_d^{(i)}\bigl(
    a_{m+1}^{(i)}N_{m+1}^{(d)} -     a_{m}^{(i)}N_{m}^{(d)}\bigr)           
\sum_{l=1}^j b_l^{(j)}\bigl(
    a_{m+1}^{(j)}N_{m+1}^{(l)} -     a_{m}^{(j)}N_{m}^{(l)}\bigr)\bigg| \cF_m\
  \right]\nonumber \\
& \qquad- \sum_{d=1}^i b_d^{(i)}\bigl(
    a_{m+1}^{(i)}\nu_{m+1}^{(d)} -     a_{m}^{(i)}\nu_{m}^{(d)}\bigr)           
\sum_{l=1}^j b_l^{(j)}\bigl(
    a_{m+1}^{(j)}\nu_{m+1}^{(l)} -     a_{m}^{(j)}\nu_{m}^{(l)}\bigr)\,.\nonumber
\end{align}
We begin by analyzing the behaviour of the deterministic term in the
right hand side of \eqref{e:incr}. We claim that for every $i\geq 1$,
\begin{equation} \label{e:incr.determ}
\lim_{n\to\infty} \frac{1}{a_{n}^{(i)}}\sum_{d=1}^i b_d^{(i)}\bigl(
    a_{n+1}^{(i)}\nu_{n+1}^{(d)} -     a_{n}^{(i)}\nu_{n}^{(d)}\bigr)
    = b_1^{(i)}\,,
\end{equation}
with $b_1^{(i)}$ given by \eqref{e:b.nk}. Indeed, for 
$d\geq 3$, by \eqref{e:a.nk} and \eqref{e:dyn.mkge3} we have 
\begin{align*}
a_{n+1}^{(i)}\nu_{n+1}^{(d)} -     a_{n}^{(i)}\nu_{n}^{(d)}
& = a_{n+1}^{(i)}\left( \nu_{n}^{(d)} \frac{i-d}{n(2+\delta)+1+\delta} 
+ \nu_{n}^{(d-1)} \frac{d-1+\delta}{n(2+\delta)+1+\delta} \right) \\
&\sim \frac{(i-d)p_d + (d-1+\delta)p_{d-1}}{2+\delta}\, a_n^{(i)}
\end{align*}
as $n\to\infty$ by \eqref{e:mean.conv}, the fact that
$a_{n+1}^{(i)}\sim a_{n}^{(i)}$  
and the same is true for $d=2$
by \eqref{e:dyn.mke2}. 
Similarly, using \eqref{e:dyn.1}, we obtain for $d=1$ that
$$
a_{n+1}^{(i)}\nu_{n+1}^{(1)} -     a_{n}^{(i)}\nu_{n}^{(1)}
\sim \left( \frac{i-1}{2+\delta}p_1 +1\right)a_n^{(i)} 
=a_n^{(i)}  +a_n^{(i)}\frac{i-1}{2+\delta} p_1,
$$
as $n\to\infty$. Therefore (with $p_0=0$),
\begin{align*}
&\lim_{n\to\infty} \frac{1}{a_{n}^{(i)}}\sum_{d=1}^i b_d^{(i)}\bigl(
    a_{n+1}^{(i)}\nu_{n+1}^{(d)} -     a_{n}^{(i)}\nu_{n}^{(d)}\bigr) \\
& = b_1^{(i)} + \frac{1}{2+\delta} \sum_{d=1}^i b_d^{(i)}\bigl[(i-d)p_d +
(d-1+\delta)p_{d-1}\bigr] = b_1^{(i)}\,,
\end{align*}
since
\begin{equation} \label{e:zero}
\sum_{d=1}^i b_d^{(i)}\bigl[ (i-d)p_d +
(d-1+\delta)p_{d-1}\bigr]
= \sum_{d=1}^{i-1} p_d \bigl[ (i-d) b_d^{(i)} + (d+\delta)
b_{d+1}^{(i)} \bigr] = 0\,.
\end{equation} 

Next, by \eqref{e:dynN.ge3}, \eqref{e:dynN.2} and \eqref{e:dynN.1},
$$
a_{n+1}^{(i)}N_{n+1}{(d)} -     a_{n}^{(i)}N_{n}{(d)} 
= a_{n+1}^{(i)} \left( N_{n}{(d)}
  \frac{i+\delta}{n(2+\delta)+1+\delta} + B_n(d)\right)\,,
$$
with 
\begin{align}
B_n(d) & = \left\{ \begin{array}{ll}
1 & \text{on $A_n(d-1)$,} \\ 
-1 & \text{on $A_n(d)$,} \\ 
0 & \text{on $\cup_{j\notin \{ d-1,d\}} A_n(j),$}
\end{array} \right. \ \ \text{for $d\geq 3$,} \notag \\
B_n(2)&  = \left\{ \begin{array}{ll}
1 & \text{on  $A_n(0)\cup A_n(1)$,} \\ 
-1 & \text{on $A_n(2)$,} \\ 
0 & \text{on $\cup_{j\geq 3} A_n(j),$}
\end{array} \right.  \label{e:Bn} \\
B_n(1)&  = \left\{ \begin{array}{ll}
1 & \text{on  $\cup_{j\geq 2} A_n(j)$ ,} \\ 
0 & \text{on $A_n(0)\cup A_n(1).$}
\end{array} \right. \notag
\end{align}
Therefore, as $n\to\infty$, 
\begin{align*}
 E&\left[  \frac{1}{a_n^{(i)}a_n^{(j)}}\sum_{d=1}^i b_d^{(i)}\bigl(
    a_{n+1}^{(i)}N_{n+1}{(d)} -     a_{n}^{(i)}N_{n}{(d)}\bigr)           
\sum_{l=1}^j b_l^{(j)}\bigl(
    a_{n+1}^{(j)}N_{n+1}{(l)} -     a_{n}^{(j)}N_{n}{(l)}\bigr)\bigg| \cF_n\
  \right]\\
&\sim  E\left[ \sum_{d=1}^i b_d^{(i)} \left( N_{n}{(d)}
  \frac{i+\delta}{n(2+\delta)+1+\delta} + B_n(d)\right) \times
\right.\\
&\left. 
\qquad \sum_{l=1}^j b_l^{(j)} \left( N_{n}{(l)}
  \frac{j+\delta}{n(2+\delta)+1+\delta} + B_n(l)\right) \bigg| \cF_n\
  \right]
\\
&= \sum_{d=1}^i b_d^{(i)}
(i+\delta)\frac{N_{n}{(d)}}{n(2+\delta)+1+\delta} 
\sum_{l=1}^j b_l^{(j)} (j+\delta)\frac{N_{n}{(l)}}{n(2+\delta)+1+\delta}
\\
&\qquad + \sum_{d=1}^i b_d^{(i)}
(i+\delta)\frac{N_{n}{(d)}}{n(2+\delta)+1+\delta} \sum_{l=1}^j
b_l^{(j)} E\bigl( B_n(l)\big| \cF_n\bigr)
\\
&\qquad 
+ \sum_{l=1}^j b_l^{(j)}
(j+\delta)\frac{N_{n}{(l)}}{n(2+\delta)+1+\delta}
 \sum_{d=1}^i b_d^{(i)} E\bigl( B_n(d)\big| \cF_n\bigr)\\
&\qquad+ \sum_{d=1}^i  \sum_{l=1}^j  b_d^{(i)} b_l^{(j)}
E\bigl( B_n(d)B_n(l)\big| \cF_n\bigr)
\\
&
:= S_{1,n}(i,j) + S_{2,n}(i,j) + S_{3,n}(i,j) +  S_{4,n}(i,j)\,.
\end{align*}
It follows by \eqref{e:freq.conv} that
$$
\sum_{d=1}^i b_d^{(i)}
(i+\delta)\frac{N_{n}{(d)}}{n(2+\delta)+1+\delta}
\to \frac{i+\delta}{2+\delta} \sum_{d=1}^i b_d^{(i)}p_d \ \
\text{a.s. as $n\to\infty$.}
$$

Next, by \eqref{e:Bn}, 
\begin{align}
 \sum_{d=1}^i & b_d^{(i)} E\bigl( B_n(d)\big| \cF_n\bigr)
= b_1^{(i)}  \left[ 1-\frac{1+\delta}{n(2+\delta)+1+\delta}\bigl(1+
  N_n{(1)}\bigr)\right] \notag \\
&+ \one_{i\geq 2} \, b_2^{(i)} \left[ \frac{1+\delta}{n(2+\delta)+1+\delta}\bigl(1+
  N_n{(1)}\bigr) - \frac{2+\delta}{n(2+\delta)+1+\delta}
  N_n{(2)}\right] \notag \\
&+ \one_{i\geq 3} \sum_{d=3}^i b_d^{(i)} \left[
  \frac{d-1+\delta}{n(2+\delta)+1+\delta} 
  N_n{(d-1)}- \frac{d+\delta}{n(2+\delta)+1+\delta}
  N_n{(d)}\right]  \notag \\
\to & b_1^{(i)}\left( 1- \frac{1+\delta}{2+\delta}p_1\right)
  + \one_{i\geq 2} \frac{1}{2+\delta}\sum_{d=2}^i b_d^{(i)} \bigl( (d-1+\delta)p_{d-1}
  - (d+\delta)p_d\bigr) \notag \\
=&  b_1^{(i)} +\frac{1}{2+\delta} \sum_{d=1}^i b_d^{(i)}
  \bigl( (d-1+\delta)p_{d-1}   - (d+\delta)p_d\bigr) \notag \\
=&  b_1^{(i)} - \frac{i+\delta}{2+\delta} \sum_{d=1}^i  b_d^{(i)}p_d \notag
\end{align}
a.s., where at the last step we used \eqref{e:zero}. We conclude that,
with probability 1, 
\begin{align*}
&S_{1,n}(i,j) \to \frac{(i+\delta)(j+\delta)}{(2+\delta)^2} 
\sum_{d=1}^i b_d^{(i)}p_d \sum_{l=1}^j b_l^{(j)} p_l\,,
\\
&S_{2,n}(i,j) \to \frac{i+\delta}{2+\delta} \sum_{d=1}^i  b_d^{(i)}p_d
\left( b_1^{(j)} - \frac{j+\delta}{2+\delta} \sum_{l=1}^j
  b_l^{(j)}p_l\right)\,, 
\\
&S_{3,n}(i,j) \to \frac{j+\delta}{2+\delta} \sum_{l=1}^ij b_l^{(j)}p_l
\left( b_1^{(i)} - \frac{i+\delta}{2+\delta} \sum_{d=1}^i
  b_d^{(i)}p_d\right)\,. 
\end{align*}
Finally, we consider the term $S_{4,n}(i,j)$. Note that, by
\eqref{e:Bn}, we have the following cases.
\begin{enumerate}
\item On $A_n(0) \cup A_n(1)$: 
$$
E\bigl( B_n(d)B_n(l)\big| \cF_n\bigr) = \one_{d=l=2}\,.
$$
\item On $A_n(m)$, $m\geq 2$,
$$
E\bigl( B_n(d)B_n(l)\big| \cF_n\bigr) = \left\{
\begin{array}{ll}
{}1 & \text{if $d,l\in \{ 1,m+1\}$ or $d=l=m$,} \\
-1 & \text{if $d=m$, $l\in \{ 1, m+1\}$ or $l=m$, $d\in \{ 1,m+1\}$.}
\end{array}
\right.
$$
\end{enumerate} 
Therefore, using the convention $b_d^{(i)}=0$ if $d>i$, we can write 
\begin{align*}
& S_{4,n}(i,j) = 
b_2^{(i)}b_2^{(j)}\frac{1+\delta}{n(2+\delta)+1+\delta}\bigl(1+ 
  N_n^{(1)}\bigr) \\
& + \sum_{m=2}^n \bigl( b_1^{(i)} -b_m^{(i)} +  b_{m+1}^{(i)}\bigr) \bigl(
  b_1^{(j)} -b_m^{(j)} +  b_{m+1}^{(j)}\bigr) 
\frac{m+\delta}{n(2+\delta)+1+\delta}   N_n^{(m)}  \\
&\to 
\sum_{m=1}^\infty  \frac{m+\delta}{2+\delta}\bigl( b_1^{(i)} -b_m^{(i)} +
  b_{m+1}^{(i)}\bigr) \bigl(   b_1^{(j)} -b_m^{(j)} +  b_{m+1}^{(j)}\bigr) p_m
\end{align*}
a.s. as $n\to\infty$. We conclude that, with probability 1, 
\begin{align} \label{e:limint.M}
& E \left[ \frac{1}{a_n^{(i)}a_n^{(j)}} \bigl( M_{n+1}^{(i)}-M_n^{(i)}\bigr)\bigl(
M_{n+1}^{(j)}-M_n^{(j)} \bigr) \Big| \cF_n\right] \to a(i,j) \\
& =: \sum_{m=1}^\infty  \frac{m+\delta}{2+\delta}\bigl( b_1^{(i)} -b_m^{(i)} +
  b_{m+1}^{(i)}\bigr) \bigl(   b_1^{(j)} -b_m^{(j)} +
  b_{m+1}^{(j)}\bigr) p_m \notag \\
&\qquad -\left( b_1^{(i)} -\frac{i+\delta}{2+\delta}\sum_{d=1}^i
  b_d^{(i)}p_d\right)
\left( b_1^{(j)} -\frac{j+\delta}{2+\delta}\sum_{l=1}^j
  b_l^{(j)}p_l\right). \notag
\end{align}

Next we   simplify the expression for $a(i,j)$, to make it clear
that $a(i,i)>0$ for all $i\geq 1$. Note that, by \eqref{e:p.k} and
\eqref{e:b.nk},  for each $i$, 
\begin{align*}
& \sum_{d=1}^{i-1}  b_d^{(i)}p_d
=  c(\delta)  (-1)^i \Gamma(i+\delta) \sum_{d=1}^{i-1} (-1)^d
\frac{1}{(i-d)!\, \Gamma(d+3+2\delta)} \\
& = c(\delta)  \frac{(-1)^i\Gamma(i+\delta)}{i+2+2\delta}
 \sum_{d=1}^{i-1} (-1)^d 
\left[ \frac{1}{(i-d)!\, \Gamma(d+2+2\delta)} + 
\frac{1}{(i-d-1)!\, \Gamma(d+3+2\delta)} \right] \\
& = c(\delta)  \frac{(-1)^i\Gamma(i+\delta)}{i+2+2\delta}
\left( -\frac{1}{(i-1)!\, \Gamma(3+2\delta)}
  +(-1)^{i-1}\frac{1}{\Gamma(i+2+2\delta)} \right)\,,
\end{align*}
where at the last step we used the telescoping property of the
sum. Elementary algebra now gives us for $i\geq 1$,
\begin{equation} \label{e:simplify.1}
b_1^{(i)} -\frac{i+\delta}{2+\delta}\sum_{d=1}^i
  b_d^{(i)}p_d = \frac{2+\delta}{\Gamma(1+\delta)}
  \frac{(-1)^{i-1}\Gamma(i+\delta)}{(i-1)!\, \Gamma(i+2+2\delta)}.
\end{equation}

Similarly,  for $i\geq 1$, using a telescopic
property, 
\begin{align*}
& \sum_{m=i+1}^\infty  \frac{m+\delta}{2+\delta}\bigl( b_1^{(i)} -b_m^{(i)} +
  b_{m+1}^{(i)}\bigr)^2 p_m 
=
\bigl( b_1^{(i)}\bigr)^2 \frac{c(\delta)}{2+\delta}
  \sum_{m=i+1}^\infty \frac{\Gamma(m+1+\delta)}{\Gamma(m+3+2\delta)} \\
& =\bigl( b_1^{(i)}\bigr)^2 \frac{c(\delta)}{2+\delta}
\sum_{m=i+1}^\infty 
\frac{1}{1+\delta} \left[
  \frac{\Gamma(m+1+\delta)}{\Gamma(m+2+2\delta)} -
  \frac{\Gamma(m+2+\delta)}{\Gamma(m+3+2\delta)} \right] \\
&= \bigl( b_1^{(i)}\bigr)^2 \frac{c(\delta)}{(1+\delta)(2+\delta)}
\frac{\Gamma(i+2+\delta)}{\Gamma(i+3+2\delta)} \\
& = \frac{\Gamma(3+2\delta)}{\Gamma(2+\delta)\bigl(
  \Gamma(1+\delta)\bigr)^2} \frac{\Gamma(i+2+\delta)\bigl(
  \Gamma(i+\delta)\bigr)^2}{ \Gamma(i+3+2\delta) \bigl(
  (i-1)!\bigr)^2}\,.
\end{align*}
Therefore, by \eqref{e:simplify.1},
\begin{align*}
&a(i,i)>  \sum_{m=i+1}^\infty  \frac{m+\delta}{2+\delta}\bigl( b_1^{(i)} -b_m^{(i)} +
  b_{m+1}^{(i)}\bigr)^2 p_m - \left( b_1^{(i)} -\frac{i+\delta}{2+\delta}\sum_{d=1}^i
  b_d^{(i)}p_d\right)^2 \\
&= \left( \frac{2+\delta}{\Gamma(1+\delta)}
  \frac{\Gamma(i+\delta)}{(i-1)!\, (i+2+2\delta)}\right)^2
\left[ \frac{\Gamma(3+2\delta)}{(2+\delta)\Gamma(3+\delta)}
\frac{(i+2+2\delta)\Gamma(i+2+\delta)}{\Gamma(i+2+2\delta)}-1\right].
\end{align*}
Note that the expression inside the bracket is greater than
$$
\frac{\Gamma(3+2\delta)}{(2+\delta)\Gamma(3+\delta)} 
\frac{ \Gamma(i+2+\delta)}{\Gamma(i+1+2\delta)}-1\,,
$$
and, since for $0<b<a$, the ratio $\Gamma(x+a)/\Gamma(x+b)$ is
increasing in $x\geq 0$, the above difference is, for $i\geq 2$, at
least
$$
\frac{\Gamma(3+2\delta)}{(2+\delta)\Gamma(3+\delta)} 
\frac{ \Gamma(2+2+\delta)}{\Gamma(2+1+2\delta)}-1=
\frac{3+\delta}{2+\delta}-1 >0\,.
$$
This shows that $a(i,i)>0$ for all $i\geq 2$. The fact that the same
is true for $i=1$ can be seen directly from \eqref{e:limint.M} (and
was also shown in Section \ref{sec:counts}). 

We know from \eqref{e:limint.M}  that
$$nG_{n,n}(i,j)\to a(i,j),\qquad (n\to\infty),$$
and from the definition \eqref{e:mart.diff} we have
$$G_{n,m}=\frac{a_m^{(i)}a_m^{(j)}}{na_n^{(i)}a_n^{(i)}} mG_{m,m}(i,j).$$
and from the regular variation property after \eqref{e:norm.a}, the
function
$$u(m):=a_m^{(i)}a_m^{(i)} mG_{m,m}(i,j)$$ is regularly varying with
index $(i+j+2\delta)/(2+\delta)$. Therefore, from Karamata's theorem
on integration of regularly varying functions
\begin{align*}
V_n(i,j)=&\sum_{m=k+1}^n G_{n,m}(i,j) =\frac{\sum_{m=k}^n u(m)}{n
           a_n^{(i)}a_n^{(j)} }\\
\sim & \frac{nu(n)}{(\frac{i+j+2\delta}{2+\delta} +1)
       na_n^{(i)}a_n^{(i)}}
\sim  a(i,j)\frac{2+\delta}{i+j+3\delta+2},
\end{align*}
for $i,j=1,\dots,k$, 
where $a(i,j)$ is defined in
\eqref{e:limint.M}. 
This verifies the first condition  the martingale central limit
theorem of Proposition \ref{prop:mgCLTmult} (with each $A_n$ being a
$k\times k$ identity matrix.) The second condition of
the theorem holds as well, since by \eqref{e:dynN.ge3}, the
differences are bounded, 
$$
\big|\bigl( N_n(j)-\nu_n(j)\bigr) - \bigl( N_{n-1}(j)-
\nu_{n-1}(j)\bigr)\big| \leq 2 \ \ \text{for all $j$,}
$$
hence, as in the one-dimensional case of Section \ref{sec:counts}, for
all $n$ large enough, the events $\bigl\{ X_{n,m,j} >\epsilon\bigr\}$
are empty for all $m\leq n$ and all $j$. We conclude that 
\begin{equation} \label{e:conv.Y}
\left( \frac{1}{n^{1/2}}\sum_{j=1}^k b_j^{(k)} \bigl( N_n(j)-
\nu_n(j)\bigr), \, k\geq 1\right) \Rightarrow \bigl( Y_k, \, k=1,2\ldots\bigr)
\end{equation}
in $\bbr^\bbn$, where $\bigl( Y_k, \, k=1,2\ldots\bigr)$ is a centered
Gaussian process with covariance function $R_Y$ given by
\begin{equation} \label{e:cov.Y}
R_Y(i,j) = \frac{2+\delta}{i+j+2+3\delta} \, a(i,j), \ i,j\geq 1\,.
\end{equation} 
We use this covariance function to define the $k\times k$ matrix
$$R_{Y,k}=(R_Y(i,j), 1 \leq i,j\leq k).$$

For a fixed $k=1,2\ldots$ the convergence in \eqref{e:conv.Y} means
that
$$
C_k\left( \frac{N_n(j)-\nu_n(j)}{n^{1/2}}, \, j=1,\ldots,k\right)^{
  T} \Rightarrow \bigl( Y_j, \, j=1,\ldots,k\bigr)\,,
$$
where $C_k$ is a $k\times k$ matrix with the entries
$$
c_{i,j} = \left\{ \begin{array}{ll}
b^{(i)}_j & j\leq i \\
0 & j>i\,.
\end{array}
\right.
$$
Using the easily checkable identity, valid for any real $r$, 
\begin{equation} \label{e:ident.b}
\sum_{m=j}^i r^{m-j}b_m^{(i)}b_j^{(m)}= b^{(i)}_j(1+r)^{i-j}, \ 1\leq
j\leq i\,,
\end{equation} 
we can check that the inverse of $C_k$, $D_k=C_k^{-1}$, has the
entries
\begin{equation} \label{e:inverse}
d_{i,j} = \left\{ \begin{array}{ll}
(-1)^{i-j}b^{(i)}_j & j\leq i \\
0 & j>i\,.
\end{array}
\right.
\end{equation}
We conclude that 
$$
\left( \frac{N_n(j)-\nu_n(j)}{n^{1/2}}, \, j=1,\ldots,k\right)
  \Rightarrow D_k\bigl( Y_j, \, j=1,\ldots,k\bigr)^T\,,
$$
and the covariance matrix of the limiting Gaussian vector is given by 
$$
\Sigma_k = D_kR_{Y,k}D_k^T\,,
$$
where $R_{Y,k}$ is the $k\times k$ matrix given after
\eqref{e:cov.Y}. 

In order to facilitate the calculation of the entries of the matrix
$\Sigma_k$, we write the matrix $R_{Y,k}$ in the form
$$
R_{Y,k} = \sum_{m=0}^\infty h_m\int_0^\infty (2+\delta)e^{-(2+3\delta)x}
R_{m,x}\, dx\,,
$$
where 
$$
h_m=\left\{ \begin{array}{ll}
-1 & m=0,\\
\frac{m+\delta}{2+\delta}p_m & m=1,2,\ldots\,,
\end{array}
\right.
$$
and the matrix $R_{m,x}$ is a $k\times k$ matrix  of the form
$$
R_{m,x} = \BC_{m,x}^T \BC_{m,x}\,.
$$
Here $\BC_{m,x}$ is a vector with the entries
$$
C_{m,x}(i) = \left\{ \begin{array}{ll}
\left( b_1^{(i)} -\frac{i+\delta}{2+\delta}\sum_{d=1}^i
  b_d^{(i)}p_d\right) e^{-ix}, \, i\geq 1    & m= 0\\
\bigl(b_1^{(i)} -b_m^{(i)} +  b_{m+1}^{(i)}\bigr) e^{-ix}, \, i\geq 1
                       & m\geq 1\,.
\end{array}
\right.
$$
Therefore,
$$
\Sigma_k = \sum_{m=0}^\infty h_m\int_0^\infty (2+\delta)e^{-(2+3\delta)x}
D_k \BC_{m,x}^T \BC_{m,x} D_k^T \, dx\,.
$$
 
Note that by \eqref{e:inverse} and \eqref{e:ident.b}, 
for any $m\geq 1$ and $i=1,2,\ldots$,
$$
\bigl( D_k \BC_{m,x}^T\bigr)(i) = \sum_{j=1}^i (-1)^{i-j}b^{(i)}_j
\bigl(b_1^{(j)} -b_m^{(j)} +  b_{m+1}^{(j)}\bigr) e^{-jx} 
$$
$$
= e^{-x} b_1^{(i)}\bigl(-1+e^{-x}\bigr)^{i-1}
- e^{-mx} b_m^{(i)}\bigl(-1+e^{-x}\bigr)^{i-m}
+ e^{-(m+1)x} b_{m+1}^{(i)}\bigl(-1+e^{-x}\bigr)^{i-m-1}\,.
$$
Therefore, for $m\geq 1$ and $i,j=1,2,\ldots$,
\begin{align*}
\bigl( D_k \BC_{m,x}^T\bigr)(i) \bigl( D_k \BC_{m,x}^T\bigr)(j)
& = b_1^{(i)}b_1^{(j)}e^{-2x} \bigl(-1+e^{-x}\bigr)^{i+j-2} \\
& - \bigl( b_1^{(i)}b_m^{(j)}+  b_m^{(i)}b_1^{(j)}\bigr) e^{-(m+1)x}
  \bigl(-1+e^{-x}\bigr)^{i+j-m-1} \\
& + \bigl( b_1^{(i)}b_{m+1}^{(j)}+  b_{m+1}^{(i)}b_1^{(j)}\bigr) e^{-(m+2)x}
  \bigl(-1+e^{-x}\bigr)^{i+j-m-2}\\
&+  b_m^{(i)}b_m^{(j)}e^{-2mx} \bigl(-1+e^{-x}\bigr)^{i+j-2m} \\
&- \bigl( b_m^{(i)}b_{m+1}^{(j)}+  b_{m+1}^{(i)}b_m^{(j)}\bigr) e^{-(2m+1)x}
  \bigl(-1+e^{-x}\bigr)^{i+j-2m-1} \\
&+ b_{m+1}^{(i)}b_{m+1}^{(j)}e^{-(2m+2)x}
  \bigl(-1+e^{-x}\bigr)^{i+j-2m-2} \\
& := \sum_{l=1}^6 \theta_{m,x}^{(l)}(i,j)\,.
\end{align*}
We have:
\begin{align*}
& \int_0^\infty (2+\delta)e^{-(2+3\delta)x} \theta_{m,x}^{(1)}(i,j) \,
dx \\
& = (-1)^{i+j}  (2+\delta) b_1^{(i)}b_1^{(j)}
\int_0^\infty  e^{-(4+3\delta)x} \bigl(1-e^{-x}\bigr)^{i+j-2}\, dx \\
&=  (-1)^{i+j}  (2+\delta) b_1^{(i)}b_1^{(j)}   B(4+3\delta,i+j-1) \\
&=  (-1)^{i+j}  (2+\delta) b_1^{(i)}b_1^{(j)}
  \frac{\Gamma(4+3\delta)(i+j-2)!}{\Gamma(i+j+3+3\delta)}\,.
\end{align*}
Similarly,
\begin{align*}
& 
\int_0^\infty (2+\delta)e^{-(2+3\delta)x} \theta_{m,x}^{(2)}(i,j) \,
dx  \\
& = (-1)^{i+j-m}  (2+\delta)\bigl( b_1^{(i)}b_m^{(j)}+
b_m^{(i)}b_1^{(j)}\bigr)
 \frac{\Gamma(m+3+3\delta)(i+j-m-1)!}{\Gamma(i+j+3+3\delta)}\,,
\end{align*}
\begin{align*}
& 
\int_0^\infty (2+\delta)e^{-(2+3\delta)x} \theta_{m,x}^{(3)}(i,j) \,
dx  \\
& = (-1)^{i+j-m}  (2+\delta)\bigl( b_1^{(i)}b_{m+1}^{(j)}+
b_{m+1}^{(i)}b_1^{(j)}\bigr)
 \frac{\Gamma(m+4+3\delta)(i+j-m-2)!}{\Gamma(i+j+3+3\delta)}\,,
\end{align*}
\begin{align*}
& 
\int_0^\infty (2+\delta)e^{-(2+3\delta)x} \theta_{m,x}^{(4)}(i,j) \,
dx  \\
& = (-1)^{i+j}  (2+\delta) b_m^{(i)}b_m^{(j)}
 \frac{\Gamma(2m+2+3\delta)(i+j-2m)!}{\Gamma(i+j+3+3\delta)}\,,
\end{align*}
\begin{align*}
& 
\int_0^\infty (2+\delta)e^{-(2+3\delta)x} \theta_{m,x}^{(5)}(i,j) \,
dx  \\
& = (-1)^{i+j}  (2+\delta) \bigl( b_m^{(i)}b_{m+1}^{(j)}+
  b_{m+1}^{(i)}b_m^{(j)}\bigr)  
 \frac{\Gamma(2m+3+3\delta)(i+j-2m-1)!}{\Gamma(i+j+3+3\delta)}\,,
\end{align*}
\begin{align*}
& 
\int_0^\infty (2+\delta)e^{-(2+3\delta)x} \theta_{m,x}^{(6)}(i,j) \,
dx  \\
& = (-1)^{i+j}  (2+\delta) b_{m+1}^{(i)}b_{m+1}^{(j)}
 \frac{\Gamma(2m+4+3\delta)(i+j-2m-2)!}{\Gamma(i+j+3+3\delta)}\,.
\end{align*}

Similarly, by \eqref{e:inverse} and \eqref{e:simplify.1}, for $i\geq
1$, 
$$
\bigl( D_k \BC_{0,x}^T\bigr)(i) =
\frac{(2+\delta)\Gamma(i+\delta)}{\Gamma(1+\delta)} 
 \sum_{l=1}^i (-1)^{l-1}e^{-lx}
 \frac{1}{(i-l)!(l-1)!\Gamma(l+2+2\delta)}\,.
$$
Therefore, for $i,j\geq 1$, 
$$
\int_0^\infty (2+\delta)e^{-(2+3\delta)x} \bigl( D_k \BC_{0,x}^T\bigr)(i)
\bigl( \BC_{0,x} D_k^T \bigr)(j)\, dx
$$
$$
=\frac{(2+\delta)^2\Gamma(i+\delta)\Gamma(j+\delta)}{\bigl(\Gamma(1+\delta)\bigr)^2} 
$$
$$
\sum_{l_1=1}^i \sum_{l_2=1}^j\frac{(-1)^{l_1+l_2}(l_1+l_2+2+3\delta)^{-1}}{(i-l_1)!(l_1-1)!
\Gamma(l_1+2+2\delta)
(j-l_2)!(l_2-1)!\Gamma(l_2+2+2\delta)}\,.
$$
We conclude that the covariance function of the limiting Gaussian
process $\bigl( Z_k, \, k=1,2\ldots\bigr)$ in \eqref{e:CLT.RN} is
given by 
\begin{align} \label{final.cov}
R_Z(i,j) &= \frac{(-1)^{i+j}}{\Gamma(i+j+3+3\delta)} \notag \\
&\sum_{m=1}^\infty 
 (m +\delta)p_m \bigl[ b_1^{(i)}b_1^{(j)} \Gamma(4+3\delta)(i+j-2)!
  \\
& + (-1)^m \bigl( b_1^{(i)}b_m^{(j)}+
b_m^{(i)}b_1^{(j)}\bigr) \Gamma(m+3+3\delta)(i+j-m-1)!  \notag \\
&+  (-1)^m \bigl( b_1^{(i)}b_{m+1}^{(j)}+
b_{m+1}^{(i)}b_1^{(j)}\bigr) \Gamma(m+4+3\delta)(i+j-m-2)!  \notag \\
&+ b_m^{(i)}b_m^{(j)} \Gamma(2m+2+3\delta)(i+j-2m)!  \notag \\
&+ \bigl( b_m^{(i)}b_{m+1}^{(j)}+
  b_{m+1}^{(i)}b_m^{(j)}\bigr)  \Gamma(2m+3+3\delta)(i+j-2m-1)! \notag \\
&+  b_{m+1}^{(i)}b_{m+1}^{(j)} \Gamma(2m+4+3\delta)(i+j-2m-2)!\bigr]\notag 
  \\
&-
  \frac{(2+\delta)^2\Gamma(i+\delta)\Gamma(j+\delta)}{\bigl(\Gamma(1+\delta)\bigr)^2}
 \notag  \\ 
& \sum_{l_1=1}^i \sum_{l_2=1}^j\frac{(-1)^{l_1+l_2}(l_1+l_2+2+3\delta)^{-1}}{(i-l_1)!(l_1-1)!
\Gamma(l_1+2+2\delta)
(j-l_2)!(l_2-1)!\Gamma(l_2+2+2\delta)}\,. \notag 
\end{align}
Once again, it is possible to show that $R_Z(i,i)>0$ for all $i\geq
1$. We omit the argument. 
\end{proof}

\section{Concluding remarks}
The model considered here is relatively simple and the calculations
are relatively complex and it remains to be seen which more realistic
models allow us to successfully conclude asymptotic normality of
degree counts. We are particularly anxious to extend our methods to
directed graphs where each node is indexed by (at least) two
characteristics such as in and out degree. Some investigations are
currently underway for the directed preferential attachment model
considered in 
\cite{
krapivsky:redner:2001,
bollobas:borgs:chayes:riordan:2003,
resnick:samorodnitsky:towsley:davis:willis:wan:2016,
resnick:samorodnitsky:2015}.
We also have a program evaluating various inferential methods for
estimating model parameters which requires asymptotic normality
results such as presented here.

\bibliography{/Users/sidresnick/Documents/SidFiles/bibfile}


\end{document}